\newcommand{\fs}{\{0,1\}^*}
\newcommand{\cs}{\{0,1\}^\infty}
\newcommand{\KA}{\textit{KA}\,}
\newcommand{\KS}{C}
\newcommand{\KP}{K}
\DeclareMathOperator{\Int}{\textrm{Int}}
\let\ge=\geqslant
\let\le=\leqslant
\newtheorem{theorem}{Theorem}
\newenvironment{proof}{\textbf{Proof}.}{\raisebox{-3pt}{\hbox{$\Box$}}}
\begin{document}
\title{Limit complexities revisited [once more]}
\date{}
\author{Laurent Bienvenu$^1$, Andrej Muchnik$^2$,\\
Alexander Shen$^{1,4,5}$ and Nikolay Vereshchagin$^{3,4}$}

\maketitle

\footnotetext[1]{Laboratoire d'Informatique Fondamentale,
    CNRS \& Universit\'e Aix-Marseille, France. Supported in part by
    ANR Sycomore and NAFIT ANR-08-EMER-008-01 grants.}

\footnotetext[2]{%
  Andrej Muchnik (24.02.1958 -- 18.03.2007)
  worked in the Institute of New Technologies in
  Education (Moscow). For many years he participated in
  Kolmogorov seminar at the Moscow State (Lomonosov) University.
  N.~Vereshchagin and A.~Shen (also participants of that
  seminar) had the privilege to know Andrej for more than two
  decades and are deeply indebted to him both as a great thinker
  and noble personality. The text of this paper was written
  after Andrej's untimely death but it (like many other papers
  written by the participants of the seminar) develops his
  ideas.}

\footnotetext[3]{%
Moscow State Lomonosov University, Russia. Supported in part by
RFBR 05-01-02803-CNRS-a, 06-01-00122-a.}

\footnotetext[4]{%
CNRS Poncelet Laboratory, Moscow}

\footnotetext[5]{%
IITP RAS, Moscow
}

\begin{abstract}
The main goal of this article is to put some known results in a
common perspective and to simplify their proofs.

We start with a simple proof of a result of
Vereshchagin~\cite{ver} saying that $\limsup_{n}\KS(x|n)$ (here
$\KS(x|n)$ is conditional (plain) Kolmogorov complexity of $x$
when $n$ is known) equals $\KS^{\mathbf{0}'}(x)$, the plain
Kolmogorov complexity with $\mathbf{0}'$-oracle.

Then we use the same argument to prove similar results for
prefix complexity, a priori probability on binary tree, to prove Conidis'
theorem~\cite{conidis} about limits of effectively open sets, and also to improve the results of Muchnik~\cite{muchnik} about limit frequencies. As a by-product,
we get a criterion of $\mathbf{0}'$ Martin-L\"of randomness
(called also $2$-randomness) proved in Miller~\cite{miller}: a
sequence $\omega$ is $2$-random if and only if there exists $c$
such that any prefix $x$ of $\omega$ is a prefix of some string
$y$ such that $\KS(y)\ge |y|-c$. (In the 1960ies this property
was suggested in Kolmogorov~\cite{kolmogorov} as one of possible
randomness definitions; its equivalence to $2$-randomness was
shown in Miller~\cite{miller}). Miller~\cite{miller} and Nies et
al.~\cite{nies} proved another $2$-randomness criterion:
$\omega$ is $2$-random if and only if $\KS(x)\ge |x|-c$ for some
$c$ and infinitely many prefixes $x$ of~$\omega$. This criterion is also
a consequence of the results mentioned above.

[The original version of this work~\cite{stacs-tocs} contained a weaker (and cumbersome) version of
Conidis' result, and the proof used low basis theorem (in quite a strange way). The full version was formulated as a conjecture. This conjecture was later proved by Conidis. Bruno Bauwens (personal communication) noted that the proof can be obtained also by a simple modification of our original argument, and we reproduce Bauwens' argument with his permission.]
\end{abstract}

%=========================================
\section{Plain complexity}
\label{sec:plain-complexity}
%=========================================

We denote by $\fs$ the set of binary strings and by $\cs$ the
set of infinite binary sequences. For $x \in \fs$, we denote by
$\KS(x)$ the plain complexity of~$x$ (the length of the shortest
description of~$x$ when an optimal description method is fixed,
see Li and Vitanyi~\cite{li-vitanyi}; no requirements about
prefixes). By $\KS(x|n)$ we mean conditional complexity of~$x$
when~$n$ is given, see for example Li and
Vitanyi~\cite{li-vitanyi}. Superscript $\mathbf{0}'$ in
$\KS^{\mathbf{0}'}$ means that we consider the relativized
version of complexity to the oracle $\mathbf{0}'$, the universal
computably enumerable set.

The following result was proved in Vereshchagin~\cite{ver}. We
provide a simple proof for it.

\begin{theorem}
        \label{plain}
For all~$x \in \fs$\textup{:}
        $$
\limsup_{n\to\infty}\, \KS(x|n)=\KS^{\mathbf{0}'}(x)+O(1).
        $$
\end{theorem}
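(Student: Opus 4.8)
The statement splits into two inequalities, and I would treat them separately.

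For the easy direction, $\limsup_n \KS(x|n) \le \KS^{\mathbf{0}'}(x) + O(1)$, I would use that $\mathbf{0}'$ is the increasing union of a computable sequence of finite approximations $\mathbf{0}'_n$ (the elements enumerated within $n$ steps). Fix an optimal $\mathbf{0}'$-machine and a shortest program $p$ with output $x$, so that $|p|=\KS^{\mathbf{0}'}(x)$. Define an ordinary machine $M$ that on input $(p,n)$ simulates the optimal machine for at most $n$ steps, answering every oracle query from $\mathbf{0}'_n$. The relativized computation on $p$ halts after finitely many queries; each positive query is enumerated into $\mathbf{0}'$ by some stage and each negative query never enters, so there is a threshold $N$ with $M(p,n)=x$ for all $n\ge N$. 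Hence $\KS(x|n)\le |p|+O(1)$ for all $n\ge N$, which gives the claimed bound on the $\limsup$.

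For the reverse inequality the plan is to package $x$ into a bounded, $\mathbf{0}'$-enumerable list. Write $V_{m,n}=\{y:\KS(y|n)\le m\}$; there are fewer than $2^{m+1}$ programs of length at most $m$, so $|V_{m,n}|<2^{m+1}$ for every $n$. If $m=\limsup_n\KS(x|n)$ then $x\in V_{m,n}$ for all large $n$, i.e. $x$ lies in $L_m:=\liminf_n V_{m,n}$, and the same counting argument (take any finite subset of $L_m$ and a common threshold) shows $|L_m|<2^{m+1}$. I would then invoke the standard reduction: if the sets $L_m$ are $\mathbf{0}'$-computably enumerable uniformly in $m$ with $|L_m|<2^{m+1}$, then $\KS^{\mathbf{0}'}(x)\le m+O(1)$ for every $x\in L_m$. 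Concretely, build one $\mathbf{0}'$-machine that reads an input $z$, enumerates $L_{|z|-1}$ using the oracle, and outputs the element whose index equals $z$; the length of $z$ already encodes the level, so no separate description of $m$ is needed and the overhead stays $O(1)$.

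The hard part, and the real content of the theorem, is showing that $L_m=\liminf_n V_{m,n}$ is $\mathbf{0}'$-enumerable. A naive quantifier count is discouraging: membership in $V_{m,n}$ is only $\Sigma_1$, so ``$x\in V_{m,n}$ for all large $n$'' reads as $\exists N\,\forall n\ge N\,\Sigma_1$, which is $\Sigma_3$, two levels above what $\mathbf{0}'$ (which enumerates exactly the $\Sigma_2$ sets) can produce. The lever that must be exploited is that the $V_{m,n}$ are not an arbitrary $\mathbf{0}'$-computable sequence of finite sets but a single uniformly c.e.\ array whose sections are uniformly bounded by $2^{m+1}$: this is precisely the structure that should collapse the complexity (already for one-element sections, a c.e.\ single-valued graph forces the selector to be computable, dropping the $\liminf$ to $\Sigma_2$). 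I would therefore aim at a direct stage-by-stage construction: process $n=0,1,2,\dots$, at each stage use the oracle to read off the finitely many current sections, and commit a string to the output list only when the bounded-section budget forces it to be persistent, arranging the enumeration so that every eventually-present string is eventually emitted while the total number emitted stays below $2^{m+1}$. Designing this commitment rule so that it is genuinely $\mathbf{0}'$-enumerable (append-only, with the oracle answering only $\Sigma_1$ and $\Pi_1$ questions) and provably captures all of $L_m$ is where I expect essentially all the difficulty to lie; everything else above is bookkeeping.
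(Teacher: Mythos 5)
Your easy direction is correct and matches the paper's argument. For the hard direction, however, you have set up the problem correctly but stopped exactly at the point where the actual content of the proof begins: you write that designing the commitment rule ``is where I expect essentially all the difficulty to lie,'' and no such rule is given. That is a genuine gap, not bookkeeping. Worse, your stated target --- proving that $L_m=\liminf_n V_{m,n}$ is itself $\mathbf{0}'$-enumerable --- is the wrong one; the paper explicitly warns that this set need not be $\mathbf{0}'$-enumerable (its natural description is $\exists N\,\forall n\ge N$ over an enumerable relation, i.e.\ $\Sigma_3$, and the bounded-cardinality hypothesis does not collapse this for the set itself). What one can and must do instead is \emph{cover} $L_m$ by a possibly larger $\mathbf{0}'$-enumerable set that still obeys the cardinality bound $2^{m+1}$; an exact enumeration is not needed for the coding step, only the bound on the number of emitted strings.

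The missing idea is the following greedy procedure. For each pair $(N,u)$ in some computable order, attempt the operation ``add $u$ to every $V_{m,n}$ with $n\ge N$''; call the operation acceptable if afterwards every section still has at most $2^{m+1}$ elements. Non-acceptability is the enumerable condition $(\exists n\ge N)\,|V_{m,n}\setminus\{u\}|\ge 2^{m+1}$ (where the $V_{m,n}$ include all previously accepted additions, which keeps the array uniformly c.e.), so acceptability is decidable with oracle $\mathbf{0}'$. Perform every acceptable operation. The set of all $u$ that ever participate in an accepted operation is uniformly $\mathbf{0}'$-enumerable, has at most $2^{m+1}$ elements (otherwise some section would overflow), and contains $L_m$, because adding an element that is already present in all $V_{m,n}$ for $n\ge N$ changes nothing and is therefore automatically acceptable. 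This accepted set may strictly contain $L_m$, which is precisely why the approach succeeds where a direct enumeration of $L_m$ would not. Without this (or an equivalent) device your outline does not constitute a proof.
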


\noindent
(In this theorem and below ``$f(x)=g(x)+O(1)$'' means that there
is a constant~$c$ such that $|f(x)-g(x)|\le c$ for all $x$.)

\begin{proof}
We start in the easy direction. Let $\mathbf{0}_n$ be the
(finite) set consisting of the elements of the universal
enumerable set $\mathbf{0}'$ that have been enumerated after~$n$
steps of computation (note that $\mathbf{0}_n$ can be computed
from~$n$). If $\KS^{\mathbf{0}'}(x)\le k$, then there exists a
description (program) of size at most $k$ that generates $x$
using $\mathbf{0}'$ as an oracle. Only finite part of the oracle
can be used in the computation that produces $x$, so $\mathbf{0}'$ can be replaced by $\mathbf{0}_n$
for all sufficiently large $n$, and oracle $\mathbf{0}_n$ can be
reconstructed if $n$ is given as a condition. Therefore,
$\KS(x|n)\le k+O(1)$ for all sufficiently large $n$, and
        $$
\limsup_{n\to\infty}\,\KS(x|n)\le \KS^{\mathbf{0}'}(x)+O(1).
        $$

For the reverse inequality, fix $k$ and assume that
$\limsup\,\KS(x|n) <k$. This means that for all sufficiently
large $n$ the string $x$ belongs to the set
        $$
U_n=\{ u \mid \KS(u|n)<k\}.
        $$
The family $U_n$ is an enumerable family of sets (given $n$ and
$k$, we can generate $U_n$); each of these sets has at most
$2^k$ elements. We need to construct a $\mathbf{0}'$-computable
process that given $k$ generates at most $2^k$ elements
including all elements that belong to $U_n$ for all sufficiently
large~$n$. (Then strings of length $k$ may be assigned as
$\mathbf{0}'$-computable codes of all generated elements.)

To describe this process, consider the following operation: for
some $u$ and $N$ add $u$ to all $U_n$ such that $n\ge N$. (In
other terms, we add a horizontal ray starting from $(N,u)$ to
the set $\mathcal{U}=\{(n,u)\mid u\in U_n\}$.) This operation is
\emph{acceptable} if all $U_n$ still have at most $2^k$ elements
after it (i.e., if before this operation all $U_n$ such that
$n\ge N$ either contain $u$ or have strictly less than $2^k$ elements).

For any given triple $u$, $N$, $k$, we can find out using
$\mathbf{0}'$-oracle whether this operation is acceptable or
not. Indeed, the operation is not acceptable if and only if some
$U_n$ for $n\ge N$ contains at least $2^k$ elements that are
distinct from~$u$. Formally, the operation is not acceptable if
    $$
(\exists n \ge N)\, \left| U_n \setminus \{u\}\right| \ge 2^k,
    $$
and this is an enumerable condition as the $U_n$ are themselves
enumerable. Now for all pairs $(N,u)$ (in some computable order)
we perform the $(N,u)$-operation if it is acceptable. (The
elements added to some $U_i$ remain there and are taken into
account when next operations are attempted.) This process is
$\mathbf{0}'$-computable since after any finite number of
operations the set $\mathcal{U}$ is enumerable (without any
oracle) and its enumeration algorithm can be
$\mathbf{0}'$-effectively found (uniformly in~$k$).

Therefore the set of all elements $u$ that participate in
acceptable operations during this process is uniformly
$\mathbf{0}'$-enumerable. This set contains at most $2^k$
elements (otherwise $U_n$ would become too big for large $n$).
Finally, this set contains all $u$ such that $u$ belongs to the
(original) $U_n$ for all sufficiently large $n$. Indeed, the
operation is always acceptable if the element we want to add is
already present!
\end{proof}

\medskip

The proof has the following structure. We have an enumerable
family of sets $U_n$ that all have at most $2^k$ elements. This
implies that the set
        $$
U_\infty=\liminf_{n\to\infty} U_n
        $$
has at most $2^k$ elements where, as usual, the $\liminf$ of a
sequence of sets is the set of elements that belong to almost
all sets of the sequence. If~$U_\infty$ were
$\mathbf{0}'$-enu\-me\-ra\-ble, we would be done. However, this
may be not the case: the criterion
        $$
u\in U_{\infty} \Leftrightarrow \exists N\,(\forall n\ge N)\, [u\in U_n]
        $$
has $\exists\forall$ prefix before an enumerable (not
necessarily decidable) relation, that is, one quantifier more
than we want (to guarantee that $U_\infty$ is
$\mathbf{0}'$-enumerable). However, in our proof we managed to
cover $U_\infty$ by a set that is $\mathbf{0}'$-enumerable and
still has at most $2^k$ elements.

%============================================================
\section{Prefix complexity and a priori probability}
\label{sec:prefix-complexity}
%============================================================

We now prove a similar result for prefix complexity (or, in
other terms, for a priori probability). Let us recall the
definition. The function $a(x)$ on binary strings (or integers)
with non-negative real values is called a \emph{semimeasure} if
$\sum_x a(x) \le 1$. The function~$a$ is \emph{lower
semicomputable} if there exists a computable total function
$(x,n)\mapsto a(x,n)$ with rational values such that for
every~$x$ the sequence $a(x,0), a(x,1),\ldots$ is a
nondecreasing sequence that has limit~$a(x)$.

There exists a maximal (up to a constant factor) lower
semicomputable semimeasure $m$ (see, e.g., Li and
Vitanyi~\cite{li-vitanyi}). The value $m(x)$ is sometimes called
the \emph{a priori probability} of~$x$. In the same way we can
define \emph{conditional} a priory probability $m(x|n)$ and
$\mathbf{0}'$-\emph{relativized} a~priori probability
$m^{\mathbf{0}'}(x)$ (which is a maximal semimeasure among the
$\mathbf{0}'$-lower semicomputable ones\label{def:moprime}).

\begin{theorem}
        \label{apriori}

For all~$x \in \fs$\textup{:}
        $$
\liminf_{n\to\infty}\,m(x|n)=m^{\mathbf{0}'}(x)
        $$
up to a $\Theta(1)$ multiplicative factor \textup{(}in other
terms, two inequalities with $O(1)$ factors hold\textup{)}.
\end{theorem}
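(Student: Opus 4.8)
The plan is to prove the two inequalities separately, in close analogy with Theorem~\ref{plain} but with $-\log$ of everything, so that the $\limsup$ there becomes a $\liminf$ here and the cardinality bound $2^k$ becomes a bound on total mass. The easy direction is $m^{\0'}(x)\le O(1)\,\liminf_n m(x\mid n)$, and it mirrors the easy direction of Theorem~\ref{plain}. Lower semicomputability of $m^{\0'}$ gives a $\0'$-computable enumeration of weighted increments $(x_1,\delta_1),(x_2,\delta_2),\dots$ with $\delta_j>0$, $\sum_j\delta_j\le 1$ and $m^{\0'}(x)=\sum_{j:\,x_j=x}\delta_j$, each increment produced by a computation querying the oracle only finitely often. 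Replacing $\0'$ by $\0_n$ answers all those queries correctly once $n$ is large, so for each $n$ I would run the same enumeration with oracle $\0_n$, rejecting any increment that would push the running total above $1$. This yields, uniformly in $n$, a conditional lower semicomputable semimeasure $\tilde m(\cdot\mid n)$, whence $\tilde m(x\mid n)\le c\,m(x\mid n)$ by maximality of the conditional a priori probability, with $c$ independent of $n$. For fixed $x$ and any partial sum $S<m^{\0'}(x)$ reached after finitely many true increments, all relevant queries are answered correctly by $\0_n$ for large $n$ and those finitely many increments never trigger truncation (their total is $\le1$), so $\tilde m(x\mid n)\ge S$ for all large $n$; hence $\liminf_n\tilde m(x\mid n)\ge m^{\0'}(x)$, and combining gives $m^{\0'}(x)\le\liminf_n\tilde m(x\mid n)\le c\,\liminf_n m(x\mid n)$.

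The hard direction is $\liminf_n m(x\mid n)\le O(1)\,m^{\0'}(x)$, and the plan is to build a $\0'$-lower semicomputable semimeasure $\mu$ with $\mu(x)\ge\liminf_n m(x\mid n)$ and then invoke maximality of $m^{\0'}$. A verbatim imitation of Theorem~\ref{plain} is blocked: the natural operation ``raise the weight of $x$ to $q$ for all $n\ge N$'' forces one to test $m(x\mid n)\ge q$ for all $n\ge N$, a $\forall n$ in front of a co-enumerable relation, which is far from $\0'$-decidable. To put myself back in the situation of Theorem~\ref{plain}, I would discretize mass along a geometric scale: for each string $x$ and each $i\ge1$ treat the pair $(x,i)$ as an \emph{element} of weight $2^{-i}$, and set
$$
\widehat U_n=\{(x,i)\mid m(x\mid n)>2^{-i}\}.
$$
Since $m$ is lower semicomputable, membership in $\widehat U_n$ is enumerable uniformly in $n$, and the total weight $\sum_{(x,i)\in\widehat U_n}2^{-i}<2\sum_x m(x\mid n)\le2$ now plays the role of the cardinality bound $2^k$.

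With this in hand I would run the construction of Theorem~\ref{plain} unchanged, with ``cardinality $\le2^k$'' replaced by ``total weight $\le2$'': for each triple $(N,x,i)$ I attempt to add the element $(x,i)$ to every $\widehat U_n$ with $n\ge N$, carrying this out precisely when it keeps the weight of each such level below $2$. The attempt fails only when some level $n\ge N$ already carries weight at least $2-2^{-i}$ on elements other than $(x,i)$, an enumerable event; so acceptability is co-enumerable, the whole process is $\0'$-computable, and the set $\widehat V$ of accepted elements is $\0'$-enumerable, has total weight $\le2$, and contains $\liminf_n\widehat U_n$ (adding an already-present element is always acceptable). Setting $\mu(x)=\sum_{i:\,(x,i)\in\widehat V}2^{-i}$ gives a $\0'$-lower semicomputable function with $\sum_x\mu(x)$ equal to the total weight of $\widehat V$, hence $\le2$, so $\mu/2$ is a $\0'$-lower semicomputable semimeasure; and since $(x,i)\in\liminf_n\widehat U_n$ whenever $2^{-i}<\liminf_n m(x\mid n)$, summing the geometric series yields $\mu(x)\ge\liminf_n m(x\mid n)$. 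Maximality of $m^{\0'}$ then gives the claim. The one genuinely new point, and the step I expect to be the main obstacle, is exactly this geometric layering: it is what converts the continuous mass constraint into a weighted-cardinality constraint whose acceptability test stays enumerable, so that the extra quantifier is saved just as in Theorem~\ref{plain}, at the harmless cost of a constant factor.
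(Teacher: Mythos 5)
Your proof is correct. The first inequality is essentially the paper's argument: your $\tilde m(\cdot\mid n)$ is a concrete implementation of the paper's conditional semimeasure $\mu(x\mid n)=m^{\mathbf{0}_n}(x)$, followed by the same appeal to maximality of the conditional a priori probability. For the second inequality you take a genuinely different route: you discretize mass into dyadic layers, treating each pair $(x,i)$ as an element of weight $2^{-i}$ and setting $\widehat U_n=\{(x,i)\mid m(x\mid n)>2^{-i}\}$, so that the construction of Theorem~\ref{plain} applies verbatim with ``cardinality at most $2^k$'' replaced by ``total weight at most $2$''; this costs only a constant factor, which the statement permits, and it has the merit of deriving the semimeasure statement directly from the discrete covering lemma of Section~\ref{sec:plain-complexity}. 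The paper instead works with the semimeasures themselves: the operation attached to a triple $(r,N,u)$ raises $m_n(u)$ to $\max(m_n(u),r)$ for all $n\ge N$, is acceptable iff every $m_n$ remains a semimeasure, and $m'(u)$ is defined as the supremum of the accepted $r$. Note that your stated reason for abandoning this direct approach is not a real obstruction: one never needs to test whether $m(x\mid n)\ge q$ already holds; non-acceptability of the raise-to-$q$ operation is the condition $(\exists n\ge N)\,\bigl[q+\sum_{y\ne u}m_n(y)>1\bigr]$, which is enumerable because the inner sum is lower semicomputable, so acceptability is $\mathbf{0}'$-decidable exactly as in Theorem~\ref{plain}. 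Your layering trick is therefore an avoidable detour, though a correct and instructive one. (One cosmetic point: the failure event in your weighted construction should be ``weight \emph{strictly greater than} $2-2^{-i}$ on the other elements,'' not ``at least,'' so that it is enumerable; with the constraint ``total weight $\le 2$'' that is indeed what you get.)
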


\begin{proof}
If $m^{\mathbf{0}'}(x)$ is greater than some $\varepsilon$, then
for sufficiently large $n$ the value $m^{\mathbf{0}_n}(x)$ is
also greater than $\varepsilon$. (Indeed, this inequality is
established at some finite stage when only a finite part of
$\mathbf{0}'$ is used.) We may assume without loss of generality
that the function $x\mapsto m^A(x)$ is a semimeasure for any $A$
(recalling the construction of the maximal semimeasure). Then,
similarly to the previous theorem, we have
        $$
\liminf_{n\to\infty}\,m(x|n)\ge
\liminf_{n\to\infty}\,m^{\mathbf{0}_n}(x)\ge
m^{\mathbf{0}'}(x)
        $$
up to constant multiplicative factors. Indeed, for the first
inequality, notice that we can define a conditional lower
semicomputable semimeasure $\mu$ by $\mu(x | n) =
m^{\mathbf{0}_n}(x)$. By maximality of~$m$, we have $\mu(x | n)
\leq m(x | n)$ for all $x,n$, up to a multiplicative factor. For
the second inequality, recall that $m^{\mathbf{0}'}(x)$ is the
nondecreasing limit of an $\mathbf{0}'$-computable sequence
$m^{\mathbf{0}'}(x,0), m^{\mathbf{0}'}(x,1), \ldots$. Let $s$ be
such that $m^{\mathbf{0}'}(x,s) \geq \frac{1}{2}
m^{\mathbf{0}'}(x)$. Since the computation of
$m^{\mathbf{0}'}(x,s)$ only uses finitely many bits of
$\mathbf{0}'$, we have for all large enough~$n$:
$m^{\mathbf{0}_n}(x,s)=m^{\mathbf{0}'}(x,s) \geq \frac{1}{2}
m^{\mathbf{0}'}(x)$ and thus $m^{\mathbf{0}_n}(x) \geq
\frac{1}{2} m^{\mathbf{0}'}(x)$.\\

The other direction of the proof is also similar to the second
part of the proof of Theorem~\ref{plain}. Instead of enumerable
finite sets $U_n$ we now have a sequence of (uniformly) lower
semicomputable functions $x\mapsto m_n(x)=m(x|n)$. Each of the
$m_n$ is a semimeasure. We need to construct an
$\mathbf{0}'$-lower semicomputable semimeasure $m'$ such that
        $$
m'(x)\ge \liminf_{n\to\infty}\,m_n(x)
        $$
Again, the $\liminf$ itself cannot be used as $m'$: we do have
${\sum_x \liminf_n m_n(x)\le 1}$ as $\sum_x m_n(x)\le 1$ for all
$n$, but unfortunately the equivalence
        $$
r < \liminf_{n\to\infty} m_n(x) \Leftrightarrow
(\exists r'>r)(\exists N)\, (\forall n\ge N)\, [r'<m_n(x)]
        $$
has too many quantifier alternations (one more than needed; note
that the quantity $m_n(x)$ is lower semicomputable making the
$[\ldots]$ condition enumerable). The similar trick helps. For a
triple $(r,N,u)$ consider an \emph{increase operation} that
increases all values $m_n(u)$ such that $n\ge N$ up to a given
rational number $r$ (not changing them if they were greater than
or equal to~$r$). This operation is \emph{acceptable} if all
$m_n$ remain semimeasures after the increase.

The question whether the increase operation is acceptable is
$\mathbf{0}'$-decidable. And if it is acceptable, by performing
it we get a new (uniformly) lower semicomputable sequence of
semimeasures. We can then try to perform an increase operation
for some other triple. Doing that for all triples (in some
computable ordering), we can then define $m'(u)$ as the upper
bound of $r$ for all successful $(r,N,u)$ increase operations
(for all $N$). This gives a $\mathbf{0}'$-lower semicomputable
function; it is a semimeasure since we verify the semimeasure
inequality for every successful increase attempt; finally,
$m'(u) \ge \liminf\, m_n(u)$ since if $m_n(u)\ge r$ for all
$n\ge N$, then the $(r,N,u)$-increase does not change anything
and is guaranteed to be acceptable at any step.~%
\end{proof}

\bigskip
The expression $-\log m(x)$, where $m$ is the maximal lower
semicomputable semimeasure, equals the so-called \emph{prefix}
complexity $\KP(x)$ (up to an additive $O(1)$ term; see for
example Li and Vitanyi~\cite{li-vitanyi}). The same is true for
relativized and conditional versions, and we get the following
reformulation of the last theorem:

\begin{theorem}
        \label{prefix}
        $$
\limsup_{n\to\infty}\,\KP(x|n)= \KP^{\mathbf{0}'}(x)+O(1).
        $$
\end{theorem}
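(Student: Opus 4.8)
The plan is to derive Theorem~\ref{prefix} directly from Theorem~\ref{apriori} by taking logarithms, using the standard fact that prefix complexity and the (negative logarithm of) a priori probability coincide up to an additive constant. Concretely, I would invoke the coding theorem in its plain, conditional, and $\mathbf{0}'$-relativized forms: $\KP(x) = -\log m(x) + O(1)$, $\KP(x|n) = -\log m(x|n) + O(1)$, and $\KP^{\mathbf{0}'}(x) = -\log m^{\mathbf{0}'}(x) + O(1)$. The equivalence of these three identities is exactly what is asserted in the paragraph preceding the statement, so I may cite it rather than reprove it.

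First I would apply $-\log$ to the conclusion of Theorem~\ref{apriori}. Since $-\log$ is a decreasing function, it converts the $\liminf$ on the left-hand side into a $\limsup$: we have $-\log \liminf_n m(x|n) = \limsup_n \bigl(-\log m(x|n)\bigr)$. The multiplicative $\Theta(1)$ gap in Theorem~\ref{apriori} (two inequalities with $O(1)$ factors) becomes an additive $O(1)$ gap after taking logarithms, because $\log(c \cdot t) = \log t + \log c$ and $\log c$ is a constant. This turns the statement $\liminf_n m(x|n) = m^{\mathbf{0}'}(x)$ up to $\Theta(1)$ into $\limsup_n \bigl(-\log m(x|n)\bigr) = -\log m^{\mathbf{0}'}(x) + O(1)$.

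The remaining step is to replace each occurrence of $-\log m$ by the corresponding prefix complexity. For the right-hand side this is immediate from the relativized coding theorem. For the left-hand side I would argue that $\limsup_n \bigl(-\log m(x|n)\bigr) = \limsup_n \KP(x|n) + O(1)$: the conditional coding theorem gives $-\log m(x|n) = \KP(x|n) + O(1)$ with a constant uniform in both $x$ and $n$, and adding a bounded quantity termwise does not disturb a $\limsup$ by more than that same bound. Combining these substitutions yields $\limsup_n \KP(x|n) = \KP^{\mathbf{0}'}(x) + O(1)$, which is the claim.

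I do not expect any genuine obstacle here, since this theorem is an equivalent reformulation rather than a fresh result. The only point demanding care is the uniformity of the additive constant in the conditional coding theorem $\KP(x|n) = -\log m(x|n) + O(1)$: the constant must not depend on $n$, otherwise passing to the $\limsup$ over $n$ would not be legitimate. This uniformity does hold for the standard construction of the conditional a priori probability, so the argument goes through, and the whole proof is essentially a one-line appeal to Theorem~\ref{apriori} with logarithms taken.
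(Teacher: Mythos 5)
Your proposal is correct and matches the paper's own treatment: the paper presents Theorem~\ref{prefix} as an immediate reformulation of Theorem~\ref{apriori} via the coding theorem ($-\log m = \KP + O(1)$ in its conditional and relativized forms), which is exactly your argument. Your added remarks about the sign flip turning $\liminf$ into $\limsup$ and the uniformity of the constant in $n$ are the right points of care but introduce nothing beyond the paper's one-line derivation.
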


\bigskip

Another corollary improves a result of Muchnik~\cite{muchnik}.
For any (partial) function $f$ from $\mathbb{N}$ to $\mathbb{N}$
let us define the \emph{limit frequency} $q_f(x)$ of an integer
$x$ as
        $$
q_f(x)=\liminf_{n\to\infty}\, \frac{\#\{i<n\mid f(i)=x\}}{n}
        $$
In other words, we look at the fraction of values~$x$
among the first $n$ values $f(0),\ldots,{f(n-1)}$ of~$f$
(undefined values are also listed) and take the $\liminf$ of
these fractions. It is easy to see that for a total computable
$f$ the function $q_f$ is a lower $\mathbf{0}'$-semicomputable
semimeasure. Moreover, it is shown in Muchnik~\cite{muchnik}
that any $\mathbf{0}'$-semicomputable semimeasure $\mu$ can be
represented as $\mu=q_f$ for some computable function~$f$. In
particular this implies that there exists a total computable
function $f$ such that $q_f=m^{\mathbf{0}'}$.\\

We would like to extend Muchnik's result to partial computable
functions~$f$. The problem is that if~$f$ is only partial
computable, the function~$q_f$ is no longer guaranteed to be
lower semicomputable. Using the second part of the proof of
Theorem~\ref{apriori}, we can nonetheless prove:

\begin{theorem}
        \label{frequency}
For any partial computable function~$f$, the function $q_f$ is
upper bounded by a lower $\mathbf{0}'$-semicomputable
semimeasure.
\end{theorem}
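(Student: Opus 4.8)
The plan is to reduce Theorem~\ref{frequency} to the $\mathbf{0}'$-semicomputability machinery already developed in the second half of the proof of Theorem~\ref{apriori}. The key observation is that even though $q_f$ itself need not be lower semicomputable when $f$ is only partial, we can still manufacture, for each $n$, a finite approximation to the frequency profile that \emph{is} lower semicomputable uniformly in $n$, and then take a $\liminf$ over $n$. Concretely, for each $n$ define $m_n(x) = \#\{i<n\mid f(i)=x\}/n$, using the convention that $f(i)$ counts toward $x$ only once its computation has halted with output $x$. For a fixed $n$ this is a finite sum of indicator contributions, each of which is lower semicomputable in the sense that we can enumerate the event ``$f(i)$ halts with value $x$'' and add $1/n$ to the running value once it does. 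Hence each $x\mapsto m_n(x)$ is a (uniformly in $n$) lower semicomputable semimeasure, since for every $n$ we have $\sum_x m_n(x)\le 1$.

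With this setup in place, the main step is simply to observe that $q_f(x) = \liminf_{n\to\infty} m_n(x)$, which is exactly the quantity that the second part of the proof of Theorem~\ref{apriori} learns how to dominate by a $\mathbf{0}'$-lower semicomputable semimeasure. First I would verify this identity: the fraction $\#\{i<n\mid f(i)=x\}/n$ appearing in $q_f$ counts all $i<n$ for which $f(i)$ is (ever) defined and equal to $x$, and this coincides with the supremum value of the nondecreasing-in-time approximation $m_n(x)$, so the $\liminf$ over $n$ of the two expressions agree. Then I would invoke verbatim the increase-operation construction from Theorem~\ref{apriori}: for triples $(r,N,u)$ we raise all values $m_n(u)$ with $n\ge N$ up to the rational $r$, accept the operation iff all the $m_n$ remain semimeasures (a $\mathbf{0}'$-decidable test, since the $m_n$ are uniformly lower semicomputable), and define $m'(u)$ as the supremum of the successful $r$'s. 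That construction delivers a $\mathbf{0}'$-lower semicomputable semimeasure $m'$ with $m'(u)\ge \liminf_n m_n(u) = q_f(u)$, which is precisely the conclusion.

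The step I expect to require the most care is confirming that the sequence $m_n$ is genuinely lower semicomputable \emph{uniformly in $n$}, given that $f$ is only partial. The subtlety is that $m_n(x)$ is a ratio with a fixed denominator $n$ but a numerator that grows as more of the $f(i)$ (for $i<n$) reveal themselves to be defined and equal to $x$; one must check that we never need to \emph{decrease} $m_n(x)$, which holds because an $f(i)$ that halts stays halted and a value once counted is never retracted. This monotonicity is what makes each $m_n$ lower semicomputable rather than merely $\mathbf{0}'$-computable, and it is exactly the hypothesis the Theorem~\ref{apriori} machinery needs. Once this uniform lower semicomputability is in hand, everything else is a direct appeal to the already-established construction, so I would keep the bulk of the writeup as a pointer to that proof rather than reproducing it.
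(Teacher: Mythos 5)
Your proposal is correct and matches the paper's own proof: both define $\mu_n(x)=\#\{i<n\mid f(i)=x\}/n$, observe that these form a uniformly lower semicomputable sequence of semimeasures with $q_f=\liminf_n\mu_n$, and then invoke the increase-operation construction from the second half of the proof of Theorem~\ref{apriori}. Your extra care about why partiality of $f$ does not break uniform lower semicomputability is a reasonable elaboration of a point the paper leaves implicit.
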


\begin{proof}
Indeed, given a partial computable function~$f$, we can define
for all~$n$ a semimeasure~$\mu_n$ as
     $$
\mu_n(x)=\frac{\#\{i<n\mid f(i)=x\}}{n};
     $$
$\mu_n$ is lower semicomputable uniformly in $n$. Then
$q_f=\liminf \mu_n$; on the other hand we know from the proof of
Theorem~\ref{apriori} that the $\liminf$ of a sequence of
(uniformly) lower semicomputable semimeasures is bounded by a
$\mathbf{0}'$-lower semicomputable semimeasure. The result
follows.~%
\end{proof}
\bigskip

The same type of argument also is applicable to the so-called
\emph{a priori complexity} defined as negative logarithm of a
maximal lower semicomputable semimeasure on the binary tree (see
Zvonkin and Levin~\cite{zvonkin-levin}). This complexity is
sometimes denoted as $\KA(x)$ and we get the following
statement:

\begin{theorem}
        \label{aprioritree}
        $$
\limsup_{n\to\infty}\KA(x|n)=\KA^{\mathbf{0}'}(x)+O(1).
        $$
\end{theorem}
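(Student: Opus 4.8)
The plan is to repeat the proof of Theorem~\ref{apriori} almost verbatim, replacing the discrete semimeasure condition $\sum_x a(x)\le 1$ by the tree (continuous) semimeasure condition $a(\Lambda)\le 1$ and $a(y)\ge a(y0)+a(y1)$ for every string $y$ (here $\Lambda$ is the empty string). Passing to $-\log$, the statement to prove is equivalent to $\liminf_{n\to\infty} a(x|n)=a^{\mathbf{0}'}(x)$ up to a $\Theta(1)$ multiplicative factor, where $a$ is the maximal lower semicomputable tree semimeasure, $a(\cdot|n)$ its conditional version, and $a^{\mathbf{0}'}$ its $\mathbf{0}'$-relativization. The easy inequality $\liminf_n a(x|n)\ge a^{\mathbf{0}'}(x)$ is obtained exactly as before: if $a^{\mathbf{0}'}(x)>\varepsilon$ then $a^{\mathbf{0}_n}(x)>\varepsilon/2$ for all large $n$ (only finitely many bits of the oracle are used), and since $x\mapsto a^{\mathbf{0}_n}(x)$ is a conditional tree semimeasure, maximality of $a(\cdot|n)$ gives $a^{\mathbf{0}_n}(x)\le O(1)\cdot a(x|n)$.

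For the reverse inequality I would construct, from the uniformly lower semicomputable sequence of tree semimeasures $a_n(y)=a(y|n)$, a single $\mathbf{0}'$-lower semicomputable tree semimeasure $a'$ with $a'(y)\ge\liminf_n a_n(y)$; maximality of $a^{\mathbf{0}'}$ then finishes the proof. As in Theorem~\ref{apriori} I process all triples $(r,N,u)$ (with $r\in\mathbb{Q}$) in a computable order and try an increase operation that raises $a_n(u)$ to $r$ for every $n\ge N$. The one genuinely new point is that a tree semimeasure is not preserved by raising a single value: increasing $a_n(u)$ may break the inequality $a_n(y)\ge a_n(y0)+a_n(y1)$ at the ancestors $y$ of $u$. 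So the increase operation must \emph{propagate upward}: after setting $a_n(u):=\max(a_n(u),r)$ I also set $a_n(y):=\max\bigl(a_n(y),a_n(y0)+a_n(y1)\bigr)$ for every proper prefix $y$ of $u$, from the parent of $u$ up to the root, restoring the tree condition at each node along the path. The operation is declared \emph{acceptable} if after this propagation $a_n(\Lambda)\le 1$ still holds for all $n\ge N$.

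Acceptability remains $\mathbf{0}'$-decidable: the operation fails exactly when $(\exists n\ge N)$ the propagated root value exceeds $1$, and that root value is a lower semicomputable quantity (a finite combination of the lower semicomputable $a_n$ and the rational $r$), so the failure condition is enumerable. Performing all acceptable operations therefore yields a uniformly $\mathbf{0}'$-lower semicomputable sequence, and I define $a'(y)$ to be the supremum of those rationals $r$ for which, at some stage of the process, the value at $y$ was (directly or by propagation) pushed to at least $r$ for all $n\ge N$; this $a'$ is $\mathbf{0}'$-lower semicomputable by construction. It is a tree semimeasure: acceptability forces $a'(\Lambda)\le 1$, and if $r_0<a'(y0)$ and $r_1<a'(y1)$ then, past the stage witnessing both, the maintained inequality $a_n(y)\ge a_n(y0)+a_n(y1)\ge r_0+r_1$ (together with the propagation step recording it) gives $a'(y)\ge r_0+r_1$, whence $a'(y)\ge a'(y0)+a'(y1)$. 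Finally $a'(y)\ge\liminf_n a_n(y)$, since for any rational $c<\liminf_n a_n(y)$ the operation $(c,N,y)$, with $N$ chosen so that $a_n(y)\ge c$ already holds for $n\ge N$, changes nothing, is trivially acceptable, and hence records $a'(y)\ge c$.

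Taking negative logarithms turns $\liminf_n a(x|n)=\Theta(1)\cdot a^{\mathbf{0}'}(x)$ into the claimed $\limsup_n\KA(x|n)=\KA^{\mathbf{0}'}(x)+O(1)$. The main obstacle, and the only place where the argument differs from Theorem~\ref{apriori}, is precisely the non-monotonicity of the tree constraint under single-node increases: I expect the care needed in defining the upward propagation, and in checking that the propagated increases are correctly \emph{recorded} so that $a'$ both remains $\mathbf{0}'$-lower semicomputable and satisfies $a'(y)\ge a'(y0)+a'(y1)$, to be the crux; everything else is a transcription of the earlier proof.
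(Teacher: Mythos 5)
Your proof is correct and follows exactly the route the paper takes: the paper's entire argument for this theorem is the parenthetical remark that the increase operation should also raise $a(y)$ for the prefixes $y$ of $x$ when necessary, with acceptability meaning that $a(\Lambda)$ still does not exceed $1$. Your upward propagation step, the acceptability test at the root, and the verification that the recorded increases yield a $\mathbf{0}'$-lower semicomputable tree semimeasure dominating the $\liminf$ are precisely the details the paper leaves implicit.
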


(To prove this we define an increase operation in such a way
that, for a given lower semicomputable semimeasure on the binary
tree~$a$, it increases not only $a(x)$ but also $a(y)$ for $y$
that are prefixes of $x$, if necessary. The increase is
acceptable if $a(\Lambda)$ still does not exceed~$1$.)

\medskip

It would be interesting to find out whether similar results are
true for monotone complexity or not (the authors do not know
this).

%===================================================
\section{Open sets of small measure}
\label{sec:small-open-sets}
%===================================================

In Section~\ref{sec:plain-complexity} we covered the $\liminf$
of a sequence of finite uniformly enumerable sets $U_i$ by a
$\mathbf{0}'$-enumerable set $V$ that is essentially no bigger
than the $U_i$. It was done in a uniform way, i.e.,~$V$ can be
effectively constructed given the enumerations of the $U_i$ and
an upper bound for their cardinalities. We now look at the
continuous version of this problem where the $U_i$ are open sets
of small measure.

We consider open sets in the Cantor space $\cs$ (the set of all
infinite sequences of zeros and ones). An \emph{interval} $[x]$
(for a binary string $x$) is formed by all sequences that have
prefix $x$. Open sets are unions of intervals. An
\emph{effectively open} subset of $\cs$ is an enumerable union
of intervals, i.e., the union of intervals $[x]$ where strings
$x$ are taken from some enumerable set.

We consider standard (uniform Bernoulli) measure on $\cs$: the
interval $[x]$ has measure $2^{-l}$ where~$l$ is the length
of~$x$.

A classical theorem of measure theory says: 
\begin{quote}\emph{if
$U_0,U_1,U_2,\ldots$ are open sets of measure at most
$\varepsilon$, then $\liminf_n U_n$ has measure at most
$\varepsilon$}, and this implies that \emph{for every
$\varepsilon'>\varepsilon$ there exists an open set of measure
at most $\varepsilon'$ that covers $\liminf_n U_n$.}
\end{quote}
Indeed,
        $$
\liminf_{n\to\infty}\,U_n = \bigcup_{N} \bigcap_{n\ge N} U_n,
        $$
and the measure of the union of an increasing sequence
        $$
V_N = \bigcap_{n\ge N} U_n,
        $$
equals the limit of measures of $V_N$, and all these measures do not exceed $\varepsilon$ since $V_N\subset U_N$. Recall also that for any measurable subset $X$ of $\cs$ its measure $\mu(X)$ is the infimum of the measures of open sets that cover~$X$.

We now can ``effectivize'' this statement in the same way
as we did before. In Section~\ref{sec:plain-complexity} we
started with an (evident) statement: \emph{if $U_n$ are finite
sets of at most $2^k$ elements, then $\liminf_n U_n$ has at most
$2^k$ elements} and proved its effective (in the halting
problem) version: \emph{for a uniformly enumerable family of
finite sets $U_n$ that have at most $2^k$ elements, the set
$\liminf_n U_n$ is contained in a uniformly
$\mathbf{0}'$-enumerable set that has at most $2^k$ elements.}

In Section~\ref{sec:prefix-complexity} we did a similar thing
with semimeasures. Again, the non-effective version is trivial:
it says that if $\sum_x m_n(x)\le 1$ for every~$n$, then
$\sum_x\liminf_n m_n(x)\le 1$. We have proved the effective
version that provides a $\mathbf{0}'$-semicomputable semimeasure
that is an upper bound for $\liminf m_n$.

For the statement about $\liminf U_n$, the effective
version is the following statement, proved in full generality by Conidis~\cite{conidis}. (In the previous version of this paper only a much weaker and more obscure statement was proven, and the full version was formulated as a conjecture.) 

\begin{theorem}[Conidis]\label{conidis}
Let $\varepsilon>0$ be a
rational number and let $U_0,U_1,\ldots$ be an enumerable family
of effectively open sets of measure at most $\varepsilon$ each.
Then for every rational $\varepsilon'>\varepsilon$ there exists
a $\mathbf{0}'$-effectively open set $V$ of measure at most
$\varepsilon'$ that contains $\liminf_{n\to\infty} U_n=
\bigcup_{N} \bigcap_{n\ge N} U_n$, and the $\mathbf{0}'$-enumeration algorithm for $V$ can be effectively found given $\varepsilon$, $\varepsilon'$, and the enumeration algorithm for $U_i$.
\end{theorem}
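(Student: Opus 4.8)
The plan is to reuse the ``acceptable operation'' technique of Theorem~\ref{plain}, now adding whole intervals instead of single strings, and to lean on the decomposition $\liminf_n U_n=\bigcup_N\bigcap_{n\ge N}U_n$ recalled above together with the slack $\delta=\varepsilon'-\varepsilon>0$. I keep modified copies $\tilde U_n$ of the sets $U_n$ (initially $\tilde U_n=U_n$, and they only ever grow) and build the cover $V$ as the union of all intervals that are ever added. The basic move is, for a string $x$ and an index $N$, the operation ``add the interval $[x]$ to $V$ and to every $\tilde U_n$ with $n\ge N$''; I call it \emph{acceptable} if afterwards $\mu(\tilde U_n)\le\varepsilon'$ for all~$n$. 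Just as in Theorem~\ref{plain}, non-acceptability is an enumerable event: the measure of an effectively open set is lower semicomputable, so ``$\mu(\tilde U_n\cup[x])>\varepsilon'$ for some $n\ge N$'' is a $\Sigma^0_1$ condition, and hence acceptability is decidable with the oracle $\mathbf{0}'$. After finitely many operations each $\tilde U_n$ is again uniformly effectively open with a $\mathbf{0}'$-effectively findable enumeration algorithm, so running through all operations in a computable order and performing the acceptable ones produces a uniformly $\mathbf{0}'$-effectively open set~$V$.

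The measure bound $\mu(V)\le\varepsilon'$ will come for free, exactly as the cardinality bound did in Theorem~\ref{plain}. Since the measure of an open set is the supremum of the measures of its finite subunions, if we had $\mu(V)>\varepsilon'$ then already finitely many of the added intervals $[x_1],\dots,[x_r]$ would satisfy $\mu([x_1]\cup\dots\cup[x_r])>\varepsilon'$; taking $m$ larger than all the indices $N_i$ used when these intervals were added, all of them sit inside the (final) $\tilde U_m$, whence $\mu(\tilde U_m)>\varepsilon'$, contradicting acceptability.

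The hard part is containment, $\liminf_n U_n\subseteq V$, and this is where the continuous case genuinely differs from Theorem~\ref{plain}. There the crucial point was that adding an element already present costs nothing, so every $u\in\liminf_n U_n$ could be added at no cost. The interval analogue fails for what one may call \emph{barely included} points: a sequence $\omega$ can lie in $U_n$ for all $n\ge N_0$ while the shortest prefix interval of $\omega$ contained in $U_n$ has length tending to infinity, so that \emph{no} single interval $[x]$ with $\omega\in[x]$ is contained in $\bigcap_{n\ge N_0}U_n$. Adding a prefix of such an $\omega$ is never literally free, and a naive version of the operation would let irrelevant intervals saturate the $\tilde U_n$ up to $\varepsilon'$ and block $\omega$ forever. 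I expect this saturation control to be the main obstacle.

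To overcome it I would spend the slack $\delta$ and add only \emph{certified} intervals, refining the operation by a second parameter: the $(x,N,M)$-operation (with $N\le M$) is attempted only once the enumerable event $[x]\subseteq\bigcap_{N\le n\le M}U_n$ has been observed, and is then performed if acceptable. Adding such an $[x]$ is free for every $\tilde U_n$ with $N\le n\le M$; only the indices $n>M$ can see an overflow, and each such overflow is at most $\mu([x])=2^{-|x|}$. Letting the window $M$ grow and apportioning the budget $\delta$ across operations so that the total overflow charged to any single $\tilde U_n$ stays below $\delta$, one keeps every $\mu(\tilde U_n)$ strictly away from $\varepsilon'$ on the prefixes that matter. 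Then, given $\omega\in\liminf_n U_n$ with $\omega\in U_n$ for $n\ge N_0$, the point $\omega$ lies in the open set $\bigcap_{N_0\le n\le M}U_n$ for every $M$, so some prefix interval $[x]$ of $\omega$ is contained in it; the corresponding certified operation is eventually attempted and, by the overflow bookkeeping, acceptable, so $\omega\in[x]\subseteq V$. Containment follows, and the decomposition $\liminf_n U_n=\bigcup_N\bigcap_{n\ge N}U_n$ with each $\bigcap_{n\ge N}U_n$ of measure at most $\varepsilon$ is precisely what guarantees that the slack $\delta$ suffices to cover everything.
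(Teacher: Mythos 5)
Your setup (the acceptable-operation framework, the $\mathbf{0}'$-decidability of non-acceptability via lower semicomputability of the measure, the measure bound via finite subunions) is fine, and you have correctly isolated the real difficulty: the ``barely included'' points, i.e.\ exactly the gap between $\liminf_n U_n$ and $\bigcup_N\Int\bigl(\bigcap_{n\ge N}U_n\bigr)$. But the step that is supposed to close that gap --- ``the corresponding certified operation is eventually attempted and, by the overflow bookkeeping, acceptable'' --- is asserted, not proved, and I do not see how to make it work as stated. The overflow that the certified operation $(x,N_0,M)$ charges to a set $\tilde U_n$ with $n>M$ can genuinely be a constant fraction of $2^{-|x|}$, and the length $|x|$ of the shortest certified prefix of a given $\omega$ for the window $[N_0,M]$ is dictated by the $U_n$, not by you; so if the budget is pre-assigned per triple $(x,N,M)$ (as it must be, computably, in advance), then since $\sum_{|x|=l}2^{-|x|}=1$ for every $l$ while the budgets must sum to $\delta$, an adversary can steer $\omega$ so that every candidate prefix lands on a triple whose budget is smaller than its overflow, and no candidate operation is ever performed. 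The aggregate version of the same problem: to cover $V_{N_0}=\bigcap_{n\ge N_0}U_n$ (which may have measure close to $\varepsilon$) by intervals certified for windows $[N_0,M]$, your cover's union is essentially $\bigcap_{N_0\le n\le M}U_n$, and the overflow it charges to $\tilde U_n$ for large $n$ tends to the excess $\mu\bigl(\bigcap_{N_0\le n\le M}U_n\bigr)-\mu(V_{N_0})$. Making this excess summable over $N_0$ requires choosing $M$ as a function of $\mu(V_{N_0})=\inf_M\mu\bigl(\bigcap_{N_0\le n\le M}U_n\bigr)$, which is only $\mathbf{0}'$-upper semicomputable --- a $\mathbf{0}''$ task. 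Nothing in your bookkeeping substitutes for this.

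The paper's proof (due to Bauwens) avoids exactly this by not insisting that the added object be an interval certified in advance: one attempts to add the whole $[x]$ to $U_i,U_{i+1},\dots$ against a threshold raised by $\delta_{x,i}$, and when some $U_m$ overflows one \emph{trims}, replacing $[x]$ by $[x]\cap U_m$ and retrying; each trim removes measure at least $\delta_{x,i}$, so the process terminates, and a point $\alpha\in\bigcap_{n\ge N}U_n$ survives every trim because it lies in each offending $U_m$. The added set is thus a finite intersection $[x]\cap U_m\cap\dots\cap U_v$ (an effectively open set, not an interval), guaranteed to fit the threshold, and guaranteed to still contain $\alpha$ --- this is the self-correcting mechanism your scheme lacks. (The paper's remark gives an equivalent clean form: choose $k_1<k_2<\dots$ by a $\Pi^0_1$ search so that $\mu\bigl(U_{1..k_1}\cup\dots\cup U_{k_{j-1}+1..k_j}\cup U_i\bigr)\le\varepsilon_j$ for all $i>k_j$, which exists by a potential argument; the union of the blocks $U_{k_{j-1}+1..k_j}$ is the desired cover. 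If you want to salvage a window-based argument, this is the missing ingredient.)
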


\begin{proof}
Let us first try the same trick as above. For every interval $[x]$ and for every 
natural $i$ we may try to add $[x]$ to all $U_i,U_{i+1},\ldots$ and see whether the restriction on the measure of $U_n$ is now violated (i.e., some of the enlarged $U_n$ have now measure greater than $\varepsilon$). This can be effectively tested with the help of $\textbf{0}'$-oracle. If the restriction is violated, this pair $(x,i)$ is ignored; if the restriction is still satisfied, we add $[x]$ to all $U_i,U_{i+1},\ldots$ and use the enlarged sets in the sequel.

The process is $\textbf{0}'$-computable, and the union of all added intervals is an $\textbf{0}'$-effectively open set of measure at most $\varepsilon$. However, trying to prove that this open set covers $\liminf U_n$ (i.e., covers $V_N$ for all $N$, see above), we encounter a problem. We can be sure that some pair $(x,N)$ is accepted (and the interval $[x]$ is added starting from $N$th position) if $[x]$ already belongs to $U_N,U_{N+1},\ldots$; in this case $[x]$ is a subset of $\Int (V_N)$.  (By $\Int X$ we mean a maximal open subset of $X$.) So this reasoning gives only a weaker statement (proved in~\cite{stacs-tocs}): the set
     $$
 \bigcup_N \Int \big( \bigcap_{n\ge N} U_n\big)   
     $$
can be covered by a $\mathbf{0}'$-effectively open set of small measure.

To get a desired statement, we need do modify the procedure. This modification was suggested by Bruno Bauwens~\cite{bauwens}. (The original proof of Conidis is  indirect: he first covers the required set up to a null set.)

First, we need some tolerance to the measure increase when we attempt to add some interval $[x]$ starting from the set number $i$: the threshold (initially $\varepsilon$) increases at this step by some $\delta_{x,i}$. The computable family of rational numbers $\delta_{x,i}>0$ is selected in such a way that the sum of all $\delta_{x,i}$ does not exceed $\varepsilon'-\varepsilon$.

Second, after we see that the attempt (to add $[x]$ to $U_i,U_{i+1},\ldots$) is unsuccessful because the (increased) threshold is crossed, we do not give up. Instead, we select a first $m$ for which $U_m$ becomes too big after adding $[x]$, and replace $[x]$ by $[x]\cap U_m$: we then try to add $[x]\cap U_m$ to $U_i,U_{i+1},\ldots$ instead  of $[x]$. May be again the attempt is unsuccessful and some $U_{t}$ (for some $t>m$) again crosses the same threshold.  Then we take the intersection $[x]\cap U_m\cap U_t$ and so on. Note that each new intersection operation decreases the size of the added set by $\delta_{x,i}$, since the outstanding part, now eliminated, was at least of this size. So this process of ``trimming'' is finite and at some point we add the trimmed set $[x]\cap U_m\cap U_t\cap\ldots\cap U_v$ without exceeding the threshold.

It remains to show that in this way we indeed cover $\liminf U_n$. Indeed, assume that some sequence $\alpha$ belongs to all $U_N,U_{N+1},\ldots$. Then, starting to add some interval containing $\alpha$ to $U_N,U_{N+1},\ldots$, we will never remove $\alpha$ by trimming, so $\alpha$ will be covered.
\end{proof}

\textbf{Remark}. In fact the intervals $[x]$ are not needed in this argument, we can start every time from the entire Cantor space. Then the proof can be reformulated as follows. Let us denote by $U_{k..l}$ the intersection $U_{k}\cap U_{k+1}\cap\ldots\cap U_l$. Fix an increasing computable sequence $\varepsilon<\varepsilon_1<\varepsilon_2<\ldots<\varepsilon'$. There exists some $k_1$ such that for every $i>k_1$ the set
      $$
U_{1..k_1} \cup U_i      
      $$
 has measure at most $\varepsilon_1$. (Indeed, if for some $i$ the measure is greater than $\varepsilon_1$, then, adding $U_i$ as a new term in the intersection, we decrease the measure of the intersection at least by $\varepsilon_1-\varepsilon$; such a decrease may happen only finitely many times.) For similar reasons we can then find $k_2$ such that for every $i$ the set
      $$
U_{1..k_1} \cup U_{k_1+1..k_2} \cup U_i  
      $$
has measure at most $\varepsilon_2$ for every $i>k_2$. And so on. This construction is $\mathbf{0}'$-computable and the union
     $$
U_{1..k_1} \cup U_{k_1+1..k_2} \cup U_{k_2+1..k_3} \cup\ldots 
      $$
is an $\mathbf{0}'$-effectively open cover of $\liminf U_n$ of measure at most $\varepsilon'$.

%=====================================================
\section{Kolmogorov and $2$-randomness}
\label{sec:kolmo-2-rand}
%=====================================================

Theorem~\ref{conidis} has an historically remarkable
corollary. When Kolmogorov tried to define randomness in
1960s, he started with the following approach. A string $x$ of
length $n$ is ``random'' if its complexity $\KS(x)$ (or
conditional complexity $\KS(x|n)$; in fact, these requirements
are almost equivalent) is close to~$n$: the \emph{randomness
deficiency} $d(x)$ of $x$ is defined as $$d(x)=|x|-\KS(x)$$ (here $|x|$
stands for the length of~$x$). This sounds reasonable, but if we
then define an infinite random sequence as a sequence whose
prefixes have deficiencies bounded by a constant, such a
sequence does not exist at all: Martin-L\"of showed that every
infinite sequence has prefixes of arbitrarily large deficiency,
and suggested a different definition of randomness using
effectively null sets. Later more refined versions of randomness
deficiency (using monotone or prefix complexity) appeared that
make the criterion of randomness in terms of deficiencies
possible. But before that, in 1968, Kolmogorov wrote: 
\begin{quote}
The most
natural definition of infinite Bernoulli sequence is the
following: $x$ is considered $m$-Bernoulli type if $m$ is such
that all [its $i$-bit prefixes] $x^i$ are \emph{initial segments} of the finite
$m$-Bernoulli sequences. Martin-L\"of gives another, possibly
narrower definition~\cite[p.~663]{kolmogorov}.
\end{quote}

Here Kolmogorov speaks about ``$m$-Bernoulli'' finite sequence
$x$ (this means that $\KS(x|n,k)$ is greater than
$\log\binom{n}{k}-m$ where $n$ is the length of $x$ and $k$ is
the number of ones in $x$). We restrict ourselves to the case of
uniform Bernoulli measure where $p=q=1/2$. In this case
Kolmogorov's idea can be described as follows: an infinite
sequence is random if each its prefix also appears as a prefix
of some random string (=string with small randomness
deficiency). More formal, let us define
        $$
\bar d (x)=\inf \{d(y)\mid \text{$x$ is a prefix of $y$}\}
        $$
and require that $\bar d(x)$ is bounded for all prefixes of an
infinite sequence $\omega$. It is shown by Miller~\cite{miller}
that this definition is equivalent to Martin-L\"of randomness
relativized to $\mathbf{0}'$ (called also
$2$-\emph{randomness}):

\begin{theorem}[Miller]
        \label{kolmogorovrandomness}
A sequence $\omega$ is Martin-L\"of\, $\mathbf{0}'$-random if
and only if the quantities $\bar d(x)$ for all prefixes $x$ of
$\omega$ are bounded from above by a common constant.
\end{theorem}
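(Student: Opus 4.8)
The plan is to prove the two directions separately, using Conidis' theorem (Theorem~\ref{conidis}) as the engine for the inclusion that can be handled cleanly and isolating the reverse inclusion as the genuinely hard part. First I would reformulate the condition geometrically. For a level $c$, the inequality $\bar d(x)\le c$ says exactly that $x$ has an extension $y\succeq x$ with $\KS(y)\ge|y|-c$; consequently ``$\bar d$ is bounded by $c$ along $\omega$'' is equivalent to the statement that no prefix of $\omega$ belongs to the set $A_c=\bigcup\{[x]\mid \text{every finite extension } y\succeq x \text{ satisfies } \KS(y)<|y|-c\}$. Thus $\bar d$ is bounded along $\omega$ if and only if $\omega\notin\bigcap_c A_c$, and the theorem becomes the assertion $\{\omega\mid\omega\text{ is }\0'\text{-random}\}=\cs\setminus\bigcap_c A_c$, which I would establish as two inclusions.

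For the direction ``$\0'$-random $\Rightarrow\bar d$ bounded'' I would prove the contrapositive inclusion $\bigcap_c A_c\subseteq\{\text{non-}\0'\text{-random}\}$ directly from Conidis' theorem. Fix $c$ and set $U_n=\bigcup\{[y]\mid |y|=n,\ \KS(y)<n-c\}$; this is a uniformly effectively open family, and $\mu(U_n)<2^{-c}$ for every $n$ since at most $2^{n-c}$ strings of length $n$ have complexity below $n-c$. If $\bar d(x)>c$, then every length-$n$ extension of $x$ lies in $U_n$ for all $n\ge|x|$, so $[x]\subseteq\bigcap_{n\ge|x|}U_n\subseteq\liminf_n U_n$; hence $A_c\subseteq\liminf_n U_n$. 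Applying Theorem~\ref{conidis} with $\varepsilon=2^{-c}$ and $\varepsilon'=2^{-c+1}$ yields, uniformly in $c$, a $\0'$-effectively open set $V_c\supseteq\liminf_n U_n\supseteq A_c$ with $\mu(V_c)\le2^{-c+1}$. The family $\{V_c\}$ is then a $\0'$-Martin-L\"of test, and any $\omega\in\bigcap_c A_c$ lies in every $V_c$ and is therefore not $\0'$-random.

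The reverse direction, ``non-$\0'$-random $\Rightarrow\bar d$ unbounded'', is where I expect the main obstacle to lie. Here I must show that every non-$\0'$-random $\omega$ belongs to $A_c$ for every $c$, i.e.\ has a prefix $x$ whose \emph{entire} subtree is $c$-compressible in \emph{plain} complexity. The natural start is to take a $\0'$-effectively open $G\ni\omega$ with $\mu(G)<2^{-k}$, for $k$ chosen slightly larger than $c$, and a prefix $x\preceq\omega$ with $[x]\subseteq G$; then among the cylinders of $G$ there are at most $2^{m-k}$ strings of length $m$, so one would like to name each extension $y\succeq x$ by its position in a single enumeration of all cylinders contained in $G$, which would give $\KS(y)\le m-k+O(1)<|y|-c$. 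The crux is that $G$ is only $\0'$-enumerable: its cylinders arise as a $\liminf$ of uniformly c.e.\ approximations, so a naive plain enumeration over-produces because of fluctuations of the approximation, and the measure bound $2^{-k}$ that controls the counting holds only in the limit, not for the transient approximations.

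Controlling this over-enumeration — extracting a genuinely plain description of \emph{every} extension from a $\0'$-enumerable small cover without a blow-up in the count — is the hard step, and it is exactly the point at which Miller's argument does real work; it can be viewed as the counterpart, run in the opposite direction, of the trimming construction used to prove Theorem~\ref{conidis}. Once it is available, choosing $k=c+O(1)$ forces $\bar d(x)>c$ and hence $\omega\in A_c$, which completes the equivalence. I would finally note that the companion criterion, the boundedness of $\KS(x)-|x|$ on infinitely many prefixes of $\omega$, follows from the same analysis.
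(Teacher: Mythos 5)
Your first direction ($\0'$-random $\Rightarrow$ $\bar d$ bounded, proved contrapositively via the sets $U_n^c$ of sequences with a compressible length-$n$ prefix and an application of Theorem~\ref{conidis}) is correct and is exactly the paper's argument. The problem is the reverse direction: you correctly locate the difficulty --- turning a $\0'$-effectively open cover of small measure into \emph{plain} complexity bounds on \emph{all} extensions of some prefix --- but you then stop, deferring to ``Miller's argument'' as a black box. That step is not an external input here; it is the content of this half of the theorem, and without it the proof is incomplete. Moreover, the obstacle you describe (over-production caused by fluctuations of the c.e.\ approximation to the $\0'$-open set) is somewhat misdiagnosed: the resolution does not require controlling the limit behaviour of the enumeration at all.

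The missing idea is to work stage by stage rather than in the limit. Write the $\0'$-effectively open cover of measure $\le 2^{-c}$ as a $\0'$-computable list of intervals $I_0,I_1,\dots$ and let $I_{i,n}$ be the stage-$n$ approximation obtained from $\0_n$, normalized so that (i) $I_{i,n}$ is either $\bot$ or an interval of size $\ge 2^{-n}$, (ii) $I_{i,n}=\bot$ for $n<i$, and (iii) the total measure of $I_{0,n},I_{1,n},\dots$ is at most $2^{-c}$ for \emph{every} $n$ --- the last condition is enforced simply by discarding excessive intervals at each stage, and stabilization of the approximation guarantees every true $I_i$ eventually survives the truncation. Now, for each fixed $n$, the set of strings of length $n$ covered by the $I_{i,n}$ is a \emph{decidable} set of at most $2^{n-c}$ strings, so each of its members is determined (given $c$) by its $(n-c)$-bit ordinal number in that set, giving $\KS(y)\le n-c+2\log c+O(1)$. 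Since $\omega$ lies in some $I_i$, and $I_{i,n}=I_i$ for all $n\ge N$, every extension of the length-$N$ prefix $u$ of $\omega$ is covered at its own stage, whence $\bar d(u)\ge c-2\log c-O(1)$. No limit is ever taken on the plain-complexity side, so the over-enumeration you worry about never occurs; note also that the bound you aimed for ($\bar d(x)>c$ exactly) must be weakened by the $2\log c$ cost of encoding $c$, which is harmless since $c$ is arbitrary.
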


There is another  related result proved in
Miller~\cite{miller} and Nies et al.~\cite{nies}:

\begin{theorem}[Miller, Nies, Stephan, Terwijn]
        \label{miller-nies}
A sequence $\omega$ is Martin-L\"of $\mathbf{0}'$-random if and
only if
        $$
\KS(\omega_0\omega_1\ldots\omega_{n-1})\ge n - c
        $$
for some $c$ and for infinitely many~$n$.
\end{theorem}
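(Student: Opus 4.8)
The plan is to prove the two implications separately, using Miller's criterion (Theorem~\ref{kolmogorovrandomness}) for the easy direction and Conidis' theorem (Theorem~\ref{conidis}) for the hard one.

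For the implication ``$\KS(\omega_0\ldots\omega_{n-1})\ge n-c$ for infinitely many $n$ $\Rightarrow$ $\omega$ is $2$-random'', I would observe that the hypothesis already bounds $\bar d$ along $\omega$. Fix such a $c$. Given any prefix $x=\omega_0\ldots\omega_{m-1}$, choose one of the infinitely many $n\ge m$ with $\KS(\omega_0\ldots\omega_{n-1})\ge n-c$; then $y=\omega_0\ldots\omega_{n-1}$ extends $x$ and satisfies $d(y)=n-\KS(y)\le c$, so $\bar d(x)\le c$. Thus $\bar d$ is bounded on all prefixes of $\omega$, and Theorem~\ref{kolmogorovrandomness} yields that $\omega$ is $2$-random. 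Note the witnesses $y$ here are genuine prefixes of $\omega$, which is exactly what makes this direction easy.

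The reverse implication is the substantial one, and I would prove its contrapositive: if the deficiency $n-\KS(\omega_0\ldots\omega_{n-1})$ tends to infinity (i.e.\ for \emph{every} constant $c$ we have $\KS(\omega_0\ldots\omega_{n-1})<n-c$ for all large $n$), then $\omega$ is not $2$-random. For each $c$ define the uniformly effectively open sets
$$
U_n^{(c)}=\bigcup\{[x]\mid |x|=n,\ \KS(x)<n-c\}.
$$
Since fewer than $2^{\,n-c}$ strings of length $n$ have complexity below $n-c$, each $U_n^{(c)}$ has measure at most $2^{-c}$. Crucially, $\omega\in U_n^{(c)}$ holds exactly when its length-$n$ prefix is compressible by more than $c$, so the assumption says precisely that $\omega\in\liminf_n U_n^{(c)}$ for every $c$. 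Applying Theorem~\ref{conidis} to the family $\{U_n^{(c)}\}_n$ with $\varepsilon=2^{-c}$ and $\varepsilon'=2^{1-c}$ yields, uniformly in $c$, a $\mathbf{0}'$-effectively open set $V^{(c)}\supseteq\liminf_n U_n^{(c)}$ of measure at most $2^{1-c}$. After shifting the index, $\{V^{(c)}\}_c$ is a $\mathbf{0}'$-Martin-L\"of test, and $\omega$ lies in every $V^{(c)}$; hence $\omega$ is not $2$-random, as required.

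I expect the main obstacle to be the reverse direction, and specifically the step that converts ``$\omega\in\liminf_n U_n^{(c)}$'' into a genuine $\mathbf{0}'$-Martin-L\"of test. One cannot simply run the easy argument backwards, because the witnesses supplied by Miller's criterion need not be prefixes of $\omega$; instead one must cover the actual $\liminf$ of the $U_n^{(c)}$ by a $\mathbf{0}'$-open set whose measure is controlled by $\varepsilon$ alone. This is where the \emph{full} strength of Conidis' theorem is indispensable: the naive interval-adding effectivization would only cover $\bigcup_N\Int(\bigcap_{n\ge N}U_n^{(c)})$, leaving a null set uncovered, which is too weak to contradict $2$-randomness. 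The Bauwens trimming argument underlying Theorem~\ref{conidis} is precisely what closes this gap.
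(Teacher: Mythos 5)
Your proposal is correct and follows essentially the same route as the paper: the hard direction (``$2$-random $\Rightarrow$ infinitely many incompressible prefixes'', proved in contrapositive via the sets $U_n^{(c)}$ of measure at most $2^{-c}$ and the full strength of Theorem~\ref{conidis}) is exactly the paper's argument, and your easy direction is the reduction the paper itself points out right after stating Theorem~\ref{miller-nies}, namely that the hypothesis bounds $\bar d$ along $\omega$ so Theorem~\ref{kolmogorovrandomness} applies. The only cosmetic difference is that you invoke Theorem~\ref{kolmogorovrandomness} as a black box, whereas the paper proves that implication simultaneously (via the approximation of a $\mathbf{0}'$-Martin-L\"of test by computable interval families) in the same combined proof.
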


In the latter criterion the condition looks stronger: if $\KS(\omega_0\omega_1\ldots\omega_{n-1})\ge n - c$ for infinitely many $n$, then evidently $\bar d$ for all prefixes of $\omega$ is bounded by $c$. Theorem~\ref{miller-nies} can be reformulated as follows: the sequence $\omega$ is \emph{not} $\mathbf{0}'$-random if and only if $n-\KS(\omega_0\ldots\omega_{n-1})\to\infty$ as $n\to\infty$.

Let us show why theorems~\ref{kolmogorovrandomness} and~\ref{miller-nies} are consequences of Theorem~\ref{conidis}. In each direction we consider the stronger statement (among the two versions provided by theorems~\ref{kolmogorovrandomness} and~\ref{miller-nies}).

\begin{proof}
Assume that $n-\KS(\omega_0\ldots\omega_{n-1})\to\infty$ for some sequence $\omega$. We need to construct a $\mathbf{0}'$-effectively open set of small measure that contains $\omega$ (together with all other sequences with the same property).

Fix some $c$. For each $n$ consider the set $D_n^c$ of all
strings $u$ of length $n$ such that $\KS(u)<n-c$ (i.e.,
strings $u$ of length~$n$ such that $d(u)>c$). It has at most
$2^{n-c}$ elements. Then consider the set
        $$
U_n^c = \bigcup_{u \in D_n^c} [u]
        $$
(=~the set of all sequences that have prefixes in $D_n^c$). The set $U_n^c$ is effectively open uniformly in~$(n,c)$, since $D_n^c$ is enumerable uniformly in~$(n,c)$. Moreover, there are at most $2^{n-c}$ strings in $D_m^c$, hence the measure of $U_n^c$ is at most $2^{-c}$. The we can apply Theorem~\ref{conidis} to get an $\mathbf{0}'$-effectively open set of small measure (say, $2^{-(c-1)}$) that covers $\liminf_n U_n^c$. All the sequences that we need to cover belong to this $\liminf$ by definition. This proves the forward direction of the equivalence. (Remark: if we wanted to prove only the weaker statement from Theorem~\ref{kolmogorovrandomness}, the weaker version of Theorem~\ref{conidis}, with $\Int(V_N)$, would be enough.)

Consider now the reverse implication; we give the proof in terms of Martin-L\"of tests. (Miller~\cite{miller} provided a proof solely in terms of Kolmogorov complexity.)
Assume that a sequence $\omega$ is covered (for each $c$) by a
$\mathbf{0}'$-computable sequence of intervals $I_0,I_1,\ldots$
of total measure at most $2^{-c}$. (We omit $c$ in our notation,
but the construction below depends on~$c$.)

Using the approximations $\mathbf{0}_n$ of $\mathbf{0}'$
(obtained by performing at most $n$ steps of computation for
each $n$) we get another (now computable) family of intervals
$I_{0,n},I_{1,n},\ldots$ such that $I_{i,n}=I_i$ for every $i$
and sufficiently large $n$. We may assume without loss of
generality that $I_{i,n}$ either has size at least $2^{-n}$
(i.e., is determined by a string of length at most~$n$) or
equals $\bot$ (a special value that denotes the empty set) since
only the limit behavior is prescribed. Moreover, we may also
assume that $I_{i,n}=\bot$ for $n<i$ and that the total measure
of all $I_{0,n},I_{1,n},\ldots$ does not exceed $2^{-c}$ for
every $n$ (the latter is achieved by deleting the excessive
intervals in this sequence starting from the beginning; the
stabilization guarantees that all limit intervals will be
eventually let through).

Since $I_{i,n}$ is defined by intervals of size at least
$2^{-n}$, we get at most $2^{n-c}$ strings of length $n$ covered
by intervals $I_{i,n}$ for any given $n$ and all $i$. This set
of strings is decidable (recall that only $i$ not exceeding $n$
are used), therefore each string in this set can be determined,
assuming $c$ is known, by a string of length $n-c$, the binary
representation of its ordinal number in this set. Note that this
string also determines~$n$ if $c$ is known.

Returning to the sequence~$\omega$, we note that it is covered
by some $I_i$ and therefore is covered by $I_{i,n}$ for this $i$
and all sufficiently large $n$ (after the value of $I_{i,n}$ is
stabilized), say, for all $n\ge N$. Let $u$ be the prefix of
$\omega$ of length $N$. All extensions of $u$ of any length $n$
are covered by $I_{i,n}$ and thus have complexity less than
$n-c+O(1)$, conditional to~$c$, hence their complexity is at
most $n-c+2\log c+O(1)$. This means that $\bar d(u)\ge c - 2
\log c -O(1)$.

Such a string $u$ can be found for every $c$, therefore $\omega$
has prefixes of arbitrarily large $\bar d$-deficiency. This implies, in particular, that
$n-\KS(\omega_0\ldots\omega_{n-1})\to\infty$.
\end{proof}

%====================================================
\section{A generalization that is not possible}
%====================================================

The assumption of Theorem~\ref{conidis} was that \emph{all} $U_i$ have small measures: $\mu(U_i)\le \varepsilon$ for every $i$. In the classical measure-theoretic result one can replace this condition by a weaker one and require that infinitely many $U_i$ have small measure; it does not matter since we can delete all other $U_i$. Formally, one can note that
   $$
\mu(\liminf_i U_i)\le \liminf_i \mu(U_i).
   $$

As Conidis has shown, for the effective version of the statement the situation is different (and this is understandable, since we do not know which $U_i$ have small measure).

\begin{theorem}[Conidis]\label{conidis-negative}
  Theorem~\ref{conidis} is no more true if we require only that infinitely many $U_i$ have measure at most~$\varepsilon$.
\end{theorem}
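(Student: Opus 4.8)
The plan is to build, for suitable rationals $0<\varepsilon<\varepsilon'<1$, a single computable family $U_0,U_1,\dots$ of effectively open sets with $\mu(U_n)\le\varepsilon$ for infinitely many $n$, whose $\liminf$ is contained in no $\mathbf{0}'$-effectively open set of measure $\le\varepsilon'$. First I would reformulate the goal using the duality that already drives this paper: a $\mathbf{0}'$-effectively open set is exactly a set of the form $\liminf_s V_{e,s}$ for a computable double sequence of ordinary effectively open sets, and after the usual trimming one may list all of them as $V_0,V_1,\dots$ with $\mu(V_{e,s})\le\varepsilon'$ at every stage. So it suffices to meet, for each $e$, the requirement $R_e$\textup{:} some point of $\liminf_n U_n$ lies outside $V_e$.

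A single observation tells us where the extra strength of ``infinitely many small'' comes from and why Theorem~\ref{conidis} no longer applies. Since $\mu(U_n)$ is lower semicomputable, the index set $S=\{n:\mu(U_n)\le\varepsilon\}$ is co-c.e., hence $\mathbf{0}'$-computable; and because infinitely many $U_n$ are small, $\liminf_n U_n$ is governed entirely by the subfamily $\{U_n:n\in S\}$, as a point of the $\liminf$ must lie in cofinitely many of the \emph{small} sets, while the large $U_n$ impose no real constraint and may be set to contain whatever we please. Relativizing Theorem~\ref{conidis} to $\mathbf{0}'$ and applying it to this $\mathbf{0}'$-enumerable all-small subfamily shows that $\liminf_n U_n$ is \emph{always} $\mathbf{0}''$-effectively coverable by a set of measure $\le\varepsilon'$. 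The negative theorem asserts that one jump is not enough, so the proof must engineer $S$ so that the second jump is genuinely needed.

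Accordingly I would run a diagonalization in which the pattern of indices that become small encodes a $\mathbf{0}''$-complete object (for instance a $\mathbf{0}'$-left-c.e.\ real such as $\Omega^{\mathbf{0}'}$), arranged so that a $\mathbf{0}'$-effective cover of $\liminf_n U_n$ of measure $\le\varepsilon'$ would let $\mathbf{0}'$ read that object off, contradicting $\mathbf{0}''\not\le_T\mathbf{0}'$. The gadget for a single $R_e$ is a small ``live'' interval $J$ that we keep inside the current $U_n$, so that its limiting position is forced into $\liminf_n U_n$; whenever the stage-$s$ approximation $V_{e,s}$ swallows $J$ we relocate $J$ into the complement of $V_{e,s}$, which is always possible since $\mu(V_{e,s})\le\varepsilon'<1$ leaves room. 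Between relocations we emit genuinely small sets ($\mu\le\varepsilon$) to keep $S$ infinite, and use occasional near-full sets $U_n\approx\cs$ only so that an abandoned position does not retroactively enter the $\liminf$; the bits of the encoding are carried by \emph{when} these resets occur. The requirements are combined on a shared portion of the space by a priority organization, since we cannot afford disjoint reservoirs of measure exceeding $\varepsilon'$, and the resulting $\liminf_n U_n$ then has measure $\le\varepsilon$ yet exits every $V_e$ at once.

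The hard part is exactly dual to the finiteness argument in the Remark after Theorem~\ref{conidis}. There the hypothesis that \emph{all} $U_i$ are small guaranteed that each successive re-expansion strictly decreased the measure of the running intersection, so the refinements stopped after finitely many steps. Here I must instead ensure that for each $R_e$ the relocations of $J$ stabilize even though the adversary's set $V_e$ is only a $\liminf$, so that $V_{e,s}$ may \emph{temporarily} cover any target and ``has $J$ been permanently captured?'' is undecidable for our oracle-free construction. Making a relocation respond only to genuine (persistent) coverage, bounding the number of relocations per requirement by a potential argument that now exploits $\varepsilon'<1$ in place of $\mu(U_i)\le\varepsilon$ (the abandoned spots are disjoint and each genuinely captured one lies in $V_e$, whose measure is at most $\varepsilon'$), and simultaneously arranging that the surviving positions transmit precisely the intended $\mathbf{0}''$-bits, is where essentially all the difficulty lies; the measure bookkeeping that keeps $\liminf_n U_n$ below $\varepsilon$ while keeping infinitely many $U_n$ below $\varepsilon$ is routine by comparison.
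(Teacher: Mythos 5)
There is a genuine gap here: what you have written is a plan for a priority construction whose central difficulties you explicitly leave unresolved (``the hard part is exactly\dots'', ``is where essentially all the difficulty lies''). In particular, the stabilization of the relocations of $J$ against a set $V_e$ that is only a $\liminf$, the interaction of the requirements on a shared reservoir of measure, and the mechanism by which the reset times ``transmit precisely the intended $\mathbf{0}''$-bits'' are all named but not carried out, and these are exactly the points where such a construction can fail. Moreover, you are aiming at a statement strictly stronger than the one to be proved: Theorem~\ref{conidis} includes a uniformity clause (the enumeration algorithm for $V$ is to be found effectively from $\varepsilon$, $\varepsilon'$ and the $U_i$), so to refute it under the weakened hypothesis it suffices to defeat every \emph{uniform} procedure, not to exhibit one fixed family whose $\liminf$ admits no small $\mathbf{0}'$-open cover at all. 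The latter, non-uniform statement is true but is precisely the ``much more ingenious construction'' of Conidis mentioned in the Remark following Theorem~\ref{conidis-negative}, which the paper does not reproduce; your sketch, even if completed, would be a proof of that harder result rather than a proof of the theorem as stated.

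The paper's own argument exploits the uniformity loophole and is essentially the object you mention only in passing: $\Omega^{\mathbf{0}'}$, the $\mathbf{0}'$-random ($2$-random) real left over from the universal $\mathbf{0}'$-Martin-L\"of test, is $\mathbf{0}'$-lower semicomputable and hence equals $\liminf_i w_i$ for a computable sequence of rationals $w_i$. Setting $U_i=(\inf_{j\ge i} w_j-\varepsilon/3,\ w_i+\varepsilon/3)$ gives a computable family of effectively open sets with $\liminf_i\mu(U_i)\le\varepsilon$ and $\Omega^{\mathbf{0}'}\in\liminf_i U_i$, uniformly in $\varepsilon$; if the uniform covering procedure of Theorem~\ref{conidis} still worked, running it for $\varepsilon=2^{-c}$ would yield a $\mathbf{0}'$-Martin-L\"of test capturing $\Omega^{\mathbf{0}'}$, contradicting its $2$-randomness. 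No priority argument is needed. Your observation that $S=\{n:\mu(U_n)\le\varepsilon\}$ is $\Pi_1$, hence $\mathbf{0}'$-computable, so that $\liminf_n U_n$ is always $\mathbf{0}''$-effectively coverable, is correct and a nice way to see why one jump is the right place to look; but it does not substitute for the missing construction.
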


\begin{proof}
    Recall Martin-L\"of's definition of randomness. The first level of an universal test is an effectively open set that covers all non-random reals (sequences) and has measure at most $1/2$. The complement of this set is an effectively closed set, and its minimal element is a lower semicomputable random number; we call it $\Omega$ (since it is closely related to Chaitin's Omega number).
    
    This consitruction can be relativized with oracle $\mathbf{0}'$: then we get a $\mathbf{0}'$-effectively open set of measure at most $1/2$ and $\Omega^{\mathbf{0}'}$, the minimal real outside it. This number is $\mathbf{0}'$-lower semicomputable, and it is easy to see that it can be represented as
    $$
\Omega^{\mathbf{0}'} = \liminf w_i,
    $$
where $w_i$ is a computable sequence. Now we show that for every rational $\varepsilon>0$ one can effectively construct a computable sequence of effectively open sets $U_{i,\varepsilon}$ such that 
    $$
\liminf_i \mu(U_{i,\varepsilon}) \le \varepsilon \quad \text{and}\quad \Omega^{\mathbf{0}'}\in \liminf_i U_i.    
    $$
If the strong version of Theorem~\ref{conidis} were true, we could conclude that $\Omega$ is not $\mathbf{0}'$-random, which is not the case.

It remains to construct the set $U_i$. One can let $U_i=(\inf_{j\ge i} w_j -\varepsilon/3, w_i+\varepsilon/3)$.
\end{proof}

\textbf{Remark}. This example shows only that an \emph{effective} transformation in Theorem~\ref{conidis} is not possible. However, Conidis (with a much more ingenious construction) has shown that there exists one specific computable sequence $U_i$ of effectively open sets such that $\liminf_i \mu(U_i)\le 1/2$  but $\liminf_i U_i$ cannot be covered by an $\mathbf{0}'$-effectively open set of a measure $3/4$. 

\section{Effective Fatou's lemma}

The results discussed above may be considered as constructive versions of classical
Fatou's lemma. This lemma says that if $\int f_i(x)\,d\mu(x)\le\varepsilon$ for $\mu$-measurable functions
$f_0,f_1,f_2,\ldots$, then
        $$
\int \liminf_{i\to+\infty} f_i (x)\,d\mu(x) \le \varepsilon.
        $$
Its constructive version can be formulated as follows:

\begin{theorem}
Let $f_i$ be a computable sequence of lower semicomputable functions such that $\int f_i(x)\, d\mu(x)$ does not exceed some rational $\varepsilon$ for all~$i$. Then for every $\varepsilon'>\varepsilon$ one can effectively construct a lower $\mathbf{0}'$-semicomputable function $\varphi$ such that 
     $$\liminf\,f_n (x)\le \varphi(x)\text{ \ for every $x$, \ and }\int\varphi(x) d\mu(x)\le\varepsilon'.$$
\end{theorem}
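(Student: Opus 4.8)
The plan is to reduce the effective Fatou's lemma to the Conidis-type machinery already developed for semimeasures and open sets. The key observation is that a lower semicomputable nonnegative function $f$ with $\int f\,d\mu\le\varepsilon$ is, up to the obvious normalization, exactly the continuous analogue of a semimeasure: its ``hypograph'' can be approximated from below by effectively open sets in $\cs\times[0,\infty)$, just as a semimeasure is approximated from below by rationals. So first I would recast each $f_i$ as an increasing family of ``layers'': for each rational threshold $r>0$, the superlevel set $\{x\mid f_i(x)>r\}$ is an effectively open subset of $\cs$ (uniformly in $i$ and $r$, by lower semicomputability), and $\int f_i\,d\mu=\int_0^\infty \mu(\{f_i>r\})\,dr$ by the layer-cake (Cavalieri) formula.

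The second step is to run the trimming/increase construction of Theorem~\ref{conidis} (or equivalently Theorem~\ref{apriori}) not on a single open set but on the whole stack of superlevel sets simultaneously, respecting the integral constraint rather than a measure or mass constraint. Concretely, I would discretize the value axis into thin rational bands and treat $f_i$ as a ``semimeasure on $\cs\times\{\text{bands}\}$'' whose total integral is at most $\varepsilon$. The increase operation now attempts to raise the value of the tentative limit function $\varphi$ on some interval $[x]$ to some rational height $r$ starting from some index $N$ (i.e.\ to add the interval to all superlevel sets $\{f_n>r'\}$ for $n\ge N$ and $r'<r$); it is \emph{acceptable} when the resulting integral stays below the (slightly inflated) threshold. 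Exactly as in the proof of Theorem~\ref{conidis}, acceptability is a $\mathbf{0}'$-decidable condition because it is the complement of an enumerable event (some $\int_{\cs} (\text{enlarged }f_n)\,d\mu$ exceeding the threshold), and when an attempt fails we \emph{trim} by intersecting with the offending superlevel set instead of giving up. Summing a computable family of tolerances $\delta_{x,i,r}>0$ with total $\le\varepsilon'-\varepsilon$ keeps the final integral at most $\varepsilon'$, and the trimming decreases the added ``volume'' by at least $\delta$ each time so the process terminates.

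Having performed every acceptable increase in a fixed computable order, I would define $\varphi(x)$ as the supremum of all heights $r$ successfully installed over $x$; this is $\mathbf{0}'$-lower semicomputable by construction, and the running verification of the integral bound at each successful step guarantees $\int\varphi\,d\mu\le\varepsilon'$. The domination $\liminf_n f_n\le\varphi$ follows from the same ``always-acceptable-if-already-present'' principle used before: if $f_n(x)>r$ for all $n\ge N$, then for every rational $r'<r$ the corresponding increase on a small enough interval around $x$ (starting at $N$) changes nothing and is therefore accepted without being trimmed away from $x$, so $\varphi(x)\ge r'$ for all such $r'$, hence $\varphi(x)\ge\liminf_n f_n(x)$.

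I expect the main obstacle to be the continuous value axis: a semimeasure has discrete mass that can be pushed around, whereas here the ``mass'' is spread over a continuum of heights, so one must argue that the band discretization loses only a controlled $O(\varepsilon'-\varepsilon)$ and that the trimming argument still terminates after finitely many steps uniformly across the superlevel stack. The cleanest way around this is probably to observe that only finitely many bands carry appreciable integral (since $\int f_i\le\varepsilon$) and to fold the discretization error into the tolerance budget $\varepsilon'-\varepsilon$; alternatively, one can invoke Theorem~\ref{conidis} as a black box band-by-band, covering $\liminf_n\{f_n>r\}$ by a $\mathbf{0}'$-effectively open set $V_r$ of measure at most $\mu(\liminf_n\{f_n>r\})+\eta_r$ and then setting $\varphi(x)=\int_0^\infty \mathbf{1}_{V_r}(x)\,dr$ over a countable dense set of thresholds, checking that the Cavalieri formula keeps $\int\varphi\,d\mu\le\varepsilon'$. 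Verifying that this reassembled $\varphi$ remains lower $\mathbf{0}'$-semicomputable and genuinely dominates $\liminf_n f_n$ pointwise is the step requiring the most care.
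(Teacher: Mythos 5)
Your main construction is essentially the paper's own proof: the paper also attempts, for each triple $(m,r,U)$, to raise $f_m,f_{m+1},\dots$ by the plateau $u=r\chi_U$, tests acceptability of the integral bound with $\mathbf{0}'$, trims $u:=\min(u,f_s)$ against the offending $f_s$ and retries, charges each retry against a tolerance budget summing to $\varepsilon'-\varepsilon$, and finally takes $\varphi$ to be the supremum of all installed $u$'s, with domination following from the ``already present, hence never trimmed at $x$'' argument. The obstacle you anticipate about the continuous value axis does not arise in that formulation: there is no need to discretize into bands, because the added object is always a \emph{single} lower semicomputable function ($r\chi_U$, then $\min(r\chi_U,f_s)$, then $\min(r\chi_U,f_s,f_{s'})$, \dots), and $\max$ and $\min$ of lower semicomputable functions are lower semicomputable, so the acceptability test ``$\int\max(f_n,u)\,d\mu$ exceeds the threshold'' stays a single enumerable event. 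One warning about your proposed alternative: invoking Theorem~\ref{conidis} as a black box band-by-band on the superlevel sets $\{f_n>r\}$ does \emph{not} work. The only uniform-in-$n$ bound available for $\mu(\{f_n>r\})$ is the Chebyshev bound $\varepsilon/r$, and $\int_0^\infty \min(1,\varepsilon/r)\,dr$ diverges, so the covers $V_r$ produced independently at each level need not reassemble into a function of finite (let alone $\le\varepsilon'$) integral; the levels must be handled jointly, under the single integral constraint, exactly as your primary construction (and the paper) does.
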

This is a natural generalization of the statement of Theorem~\ref{conidis} (which considers the special case when functions are indicator functions of open sets) and may be proved by essentially the same argument. 

To make the statement precise, we need to say on which space all $f_i$ are defined. We do not try to formulate this statement in full generality and note only that we can  consider Cantor space, the discrete space $\mathbb{N}$ or reals (and the same proof works).

\begin{proof}
For all integers $m$, for all positive rational numbers $r$, and  and for each open interval $U$ we consider an auxiliary function $u=r\chi_U$ (which is equal to $r$ inside $U$ and is equal to $0$ elsewhere), and try to increase all $f_m,f_{m+1},\ldots$ up to $u$:
  $$ f_s := \max (f_s,u)$$
(for $s=m,m+1,m+2,\ldots$) in the hope that the integral of $f_s$ still does not exceed the (increased) threshold. Note that the function $u$ is lower semicomputable, the maximum of two lower semicomputable functions is lower semicomputable, and therefore crossing the threshold is an enumerable event that can be checked using $\mathbf{0}'$. If we encounter some $s$ where this integral exceeds the threshold, we trim $u$:
     $$u:=\min (u,f_s)$$
and start over,  increasing all $f_m,f_{m+1},\ldots$ up to (new) $u$. Now we can make at least one step more, since the function that created troubles is now used as a cap and for this $s$ the integral does not exceed even the old threshold. But we may get again into troubles on some later stage $s'>s$. In this case we use $f_{s'}$ as the cap, too: 
   $$u:=\min(u,f_{s'}).$$
And so on. Note that the overflow can happen only finitely many times (for the same reasons as before: after each trimming the integral of $u$ decreases at most by $\varepsilon'-\varepsilon$, the increase of the threshold). So finally we get a lower semicomputable function whose integral does not exceed the increased threshold, and proceed to the next triple $(m,r,U)$.

The $\mathbf{0}'$-lower semicomputable function $\varphi$ that we need to construct can be defined now as the supremum of all the functions $u$ constructed on all steps. The integral of this function cannot be large, since for any finite set of $u$-functions the supremum of them (even together with one of $f_i$) was below the threshold.

If $\liminf_i f_i(x)>r$ for some $r$, then $f_i(x)>r$ for all $i$ starting from some $N$. Take some interval $U$ that contains $x$, and start adding $r\chi_U$ to $f_N, f_{N+1}, \ldots$. Since $f_i(x)>r$ for $i\ge N$, the trimming will not change the value of $u(x)$, so after this step the value at $x$ exceeds $r$.
\end{proof}

In this way we can also get the results of Sections~\ref{sec:plain-complexity} and~\ref{sec:prefix-complexity} as corollaries.

\vspace{2mm}

\textbf{Acknowledgments.} The authors are thankful to Steve   Simpson, Bjorn Kjos-Hanssen and Peter Cholak for useful discussions, to the members of LIF and Poncelet laboratories, to the participants of Kolmogorov seminar and to two anonymous referees of~\cite{stacs-tocs} for their numerous comments and suggestions. As we have said, the nice direct argument for Conidis' theorem (Theorem~\ref{conidis}) was suggested by Bruno Bauwens~\cite{bauwens}.

\end{document}